\newcounter{item}[section]
\newcounter{kirshr}
\newcounter{kirsha}
\newcounter{kirshb}
\newtheorem{theorem}{Theorem}[section]
\newtheorem{corollary}[theorem]{Corollary}
\theoremstyle{definition}
\newtheorem{remark}[theorem]{Remark}
\newtheorem{definition}[theorem]{Definition}
\def\(R)RA{{\bf (R)RA}}
\begin{document}

\begin{center}
\Large{Distinguishing Models by Formulas and the Number of Countable Models}\\[1cm]
\small{Mohammad Assem}
\begin{abstract} We indicate a way of distinguishing between structures, for which, we call two structures \emph{distinguishable.} Roughly, being distinguishable means that they differ in the number of realizations each gives for some formula. Being non-distinguishable turns out to be an interesting equivalence relation that is weaker than isomorphism and stronger than elementary equivalence. We show that this equivalence relation is Borel in a Polish space that codes countable structures. It then follows, without assuming the Continuum Hypothesis, that for any first order theory in a countable language, if it has an uncountable set of countable models that are pairwise distinguishable, then actually it has such a set of continuum size. We show also, as an easy consequence of our results, that Vaught's conjecture holds for the language with only one unary relation symbol.
\end{abstract}
\emph{MSC classification: 03C15\\
Keywords: Vaught's Conjecture, Countable models, equivalence relations}
\end{center}

\vspace{1cm}

\doublespacing
The subject of this paper relates to the famous conjecture of Vaught. Vaught's conjecture states that the number of countable models (non-isomorphic) of a first-order theory is either countable or continuum. Settling the conjecture stills an open problem since 1961. It turns out interesting to study the conjecture for equivalence relations other than isomorphism between models. For example, it is well known that the conjecture holds if we consider elementary equivalence. Here we show that the conjecture holds for some stronger equivalence relation. We also give an example in which  our equivalence relation is as strong as isomorphism.

\vspace{1cm}

\section*{Basics} Our system of notation is mostly standard, but the following list may be useful. Throughout, both $\omega$ and $\mathbb{N}$ denote the set of natural numbers $\{0,1,2,\ldots\}$ and for every $n\in\omega$ we have $n=\{0,\ldots,n-1\}.$ Let $A$ and $B$ be sets. Then ${}^AB$ denotes the set of functions whose domain is $A$ and whose range is a subset of $B.$ In addition, $|A|$ denotes the cardinality of $A$ and $\mathcal{P}(A)$ denotes the power set of $A$, that is, the set of all subsets of $A$. If $f:A\longrightarrow B$ is a function and $X\subseteq A,$ then $f^*(X)=\{f(x):x\in X\}.$ Moreover, $f^{-1}:\mathcal{P}(B)\longrightarrow\mathcal{P}(A)$ acts between the powersets.
Let ${}^{<\omega}\omega$ denote the set of finite sequences of naturals.
\vspace{1cm}

 We recall now some notions.
Let $L$ denote a non-empty countable relational language (this entails no loss of generality): $L=(R_i)_{i \in I}$ where $I$ is a non-empty countable index set and $R_i$ is an $n_i$-ary relation symbol. Denote by $X_L$ the space $$X_L=\prod_{i \in I} 2^{(\mathbb{N}^{n_i})}.$$ We view the space $X_L$ as the space of countably infinite \emph{L-structures}, identifying every $x=(x_i)_{i \in I} \in X_L$ with the structure $$\mathcal{U}_x=(\mathbb{N},R_i^{\mathcal{U}_x})_{i\in I},$$ where $R_i^{\mathcal{U}_x}(\bar{s})\Leftrightarrow x_i(\bar{s})=1, \mbox{ for }\bar{s}\in \mathbb{N}^{n_i}$. We call $x$ the \textit{code} for the $L$-structure $\mathcal{U}_x$.

$L_{\omega_1\omega}$ is an infinitary language. There is a countably infinite set of \emph{variables}. The \emph{atomic formulas} are those of the form $R_i(v_{1},\ldots,v_{n_i}),$ where $R_i$ is a relation symbol of $L$ and $v_j$'s are arbitrary variables. The \emph{formulas} of $L_{\omega_1\omega}$ are built up from these formulas using negation, (first order) quantifiers, and finite or countably infinite conjunctions and disjunctions (of sets of formulas whose variables come from a fixed finite set). For $\varphi$, a formula of $L_{\omega_1\omega}$, let $\Delta \varphi$ denote the set of its free variables. A formula of $L_{\omega_{1}\omega}$ that has no free variables is called a \emph{sentence}.

A \emph{fragment} $F$ of $L_{\omega_{1}\omega}$ is a set of formulas in $L_{\omega_{1}\omega}$ containing all atomic formulas, closed under subformulas, negation, quantifiers and finite conjunctions and disjunctions.

\begin{definition}
For $\varphi(=\varphi(\bar{v}))$ a formula of $L_{\omega_{1}\omega}$ and $\bar{s}$ a finite sequence from $\mathbb{N}$ of appropriate length (i.e, $\bar{s}\in{}^{|\Delta\varphi|}\omega$), let $$Mod(\varphi,\bar{s})=\{x\in X_L: \mathcal{U}_x\models\varphi[\bar{s}]\},$$ where $\varphi[\bar{s}]$ denotes the sentence obtained from the formula $\varphi(\bar{v})$ by substituting $\bar{s}$ for the free variables. (If $\varphi$ is a sentence, we write $Mod(\varphi)$ for $Mod(\varphi,())$).
\end{definition}

Let $t_F$ be the topology on $X_L$ generated by $\mathcal{B}_F=\{Mod(\varphi,\bar{s}):\varphi\in F, \bar{s}\in{}^{|\Delta\varphi|}\omega\}.$ By a result of Sami (see \cite{Sami}), if $F$ is a countable fragment, then $t_F$ is a Polish topology on $X_L.$

\begin{definition}Let $F$ be a fragment of $L_{\omega_{1}\omega}$ and let $A\subseteq F$. We say that $x,y\in X_L$ (or \emph{their corresponding structures}) are \textit{distinguishable in} $A$, if there is $\varphi\in A$ such that $|\varphi^x|\neq|\varphi^y|,$ where $\varphi^x=\{\bar{s}\in{}^{|\Delta\varphi|}\omega:\mathcal{U}_x\models\varphi[\bar{s}]\}$. (It is clear that if two structures are distinguishable, then they are non-isomorphic). Notice that, if $\varphi$ is a sentence, then for all $x$, either $\varphi^x$ is empty or else contains only the empty sequence.
\end{definition}

\section*{The Equivalence Relation(s)}Let $F$ be a fragment of $L_{\omega_{1}\omega}$ and $A\subseteq F$. Let $E_A$ be the following equivalence relation on $X_L$:
$$E_A=\{(x,y)\in X_L\times X_L: \mbox{ for all }\varphi\in A, \mbox{ }|\varphi^x|=|\varphi^y|\}.$$
Let us now state the main theorem of the paper.
\begin{theorem}[Main theorem]\label{main}
For $F$ a fragment of $L_{\omega_{1}\omega}$ and $A\subseteq F$ countable, $E_A$ is Borel in the product topology $(X_L,t_F)\times(X_L,t_F)$.
\end{theorem}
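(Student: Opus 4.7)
The plan is to reduce Theorem \ref{main} to a single-formula statement and then exploit the fact that in a fragment the basic opens of $t_F$ are in fact clopen. Writing
$$E_A \;=\; \bigcap_{\varphi\in A} E_\varphi, \qquad E_\varphi := \{(x,y)\in X_L\times X_L : |\varphi^x|=|\varphi^y|\},$$
and using that $A$ is countable, it suffices to show each $E_\varphi$ is Borel in the product space. Since $\varphi^x \subseteq {}^{|\Delta\varphi|}\omega$ is at most countable, $|\varphi^x|$ takes values in $\omega\cup\{\aleph_0\}$, and so
$$E_\varphi \;=\; \bigcup_{\kappa\in\omega\cup\{\aleph_0\}} C_\varphi^\kappa \times C_\varphi^\kappa, \qquad C_\varphi^\kappa := \{x\in X_L : |\varphi^x|=\kappa\},$$
which is a countable union of rectangles. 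The problem thus reduces to showing each $C_\varphi^\kappa$ is Borel in $(X_L,t_F)$.

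The key observation is that since $F$ is a fragment and hence closed under negation, $Mod(\neg\varphi,\bar{s}) = X_L\setminus Mod(\varphi,\bar{s})$ is also open, so each basic open $Mod(\varphi,\bar{s})$ is actually clopen. Next I would form, for each $n\in\omega$, the ``at least $n$'' set
$$C_\varphi^{\geq n} \;=\; \bigcup \Bigl\{\, \bigcap_{i=1}^{n} Mod(\varphi,\bar{s}_i) \;:\; \bar{s}_1,\ldots,\bar{s}_n \in {}^{|\Delta\varphi|}\omega \text{ pairwise distinct} \,\Bigr\}.$$
Since $\Delta\varphi$ is finite, ${}^{|\Delta\varphi|}\omega$ is countable, so $C_\varphi^{\geq n}$ is a countable union of finite intersections of open sets, and therefore open. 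Then $C_\varphi^n = C_\varphi^{\geq n}\setminus C_\varphi^{\geq n+1}$ is Borel for every $n\in\omega$, and $C_\varphi^{\aleph_0} = \bigcap_{n\in\omega} C_\varphi^{\geq n}$ is Borel as well. Plugging these back into the two displays above shows that $E_A$ is Borel.

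I do not foresee a serious obstacle; the argument rests on two small observations, that $\varphi^x$ is automatically at most countable and that membership in $Mod(\varphi,\bar{s})$ is a clopen condition (both following directly from the hypotheses on $F$ and on the ambient space). The only place calling for a little care is the value $\kappa=\aleph_0$, which one handles indirectly through the auxiliary ``$\geq n$'' sets rather than as an immediate basic Borel combination; once this move is made, the rest is bookkeeping.
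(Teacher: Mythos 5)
Your proposal is correct, and it takes a genuinely different route from the paper's main proof. The paper never isolates the exact-cardinality sets: it splits each condition $|\varphi^x|=|\varphi^y|$ into ``both finite and equal'' versus ``both infinite,'' encodes the finite case by the existence of a pair of injections $f,g\colon n\to{}^{<\omega}\omega$ satisfying the pointwise conditions $t\in\varphi^x\Leftrightarrow g(f^{-1}(t))\in\varphi^y$ and $t\in\varphi^y\Leftrightarrow f(g^{-1}(t))\in\varphi^x$ (each a countable intersection of Boolean combinations of the sets $Mod(\varphi,t)$), and handles the infinite case via a fixed enumeration $\mu$ of ${}^{<\omega}\omega$ and the clause $(\forall n)(\exists m,k>n)(\mu(m)\in\varphi^x\wedge\mu(k)\in\varphi^y)$. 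You instead partition by the exact value $\kappa\in\omega\cup\{\aleph_0\}$, write $E_\varphi$ as a countable union of rectangles $C_\varphi^\kappa\times C_\varphi^\kappa$, and obtain each $C_\varphi^\kappa$ from the open ``at least $n$'' sets $C_\varphi^{\geq n}$. This is shorter and more transparent than the paper's injection bookkeeping, and it is in fact closer in spirit to the paper's Remark \ref{equality} (which counts realizations with $\exists^n$ quantifiers), except that you work directly in the product space and need no equality symbol. Two small points: your clopenness observation is superfluous --- openness of $Mod(\varphi,\bar s)$ for $\varphi\in A\subseteq F$ already suffices, since Borel sets are closed under complementation, and that is all the set difference $C_\varphi^{\geq n}\setminus C_\varphi^{\geq n+1}$ requires; and both your argument and the paper's depend on $\Delta\varphi$ being finite, so that ${}^{|\Delta\varphi|}\omega$ is countable and the unions over tuples (respectively over injections) are countable --- this is guaranteed by the paper's convention that infinitary conjunctions and disjunctions draw their variables from a fixed finite set, but it is worth stating explicitly.
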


It should be remarked that the proofs in this paper hold for languages with or without equality. We also present, in Remark \ref{equality} a shorter  proof of our Main Theorem using languages with equality.

Our next proof of Theorem \ref{main} needs the development of some tools that will enable us to express \textbf{appropriately} sentences talking about sets, sentences like ``the following two sets $X,Y$ are of the same size".

Let $\mu$ be a bijection between $\mathbb{N}$ and ${}^{<\omega}\omega$. Then we can easily see that for a set $X\subseteq {}^{<\omega}\omega$:
$$X\mbox{ is infinite \textit{iff}  } (\forall n)(\exists m>n) \mu(m)\in X.$$
Suppose now for $X,Y$ subsets of ${}^{<\omega}\omega$, we want to say that $|X|=|Y|<\omega$. We show that the following sentence states appropriately what we want: ``
\textit{There is } $n<\omega$ \textit{ and there are injective maps} $f,g:n\longrightarrow {}^{<\omega}\omega$ \textit{such that} $(\forall t)[t\in X\Leftrightarrow g(f^{-1}(t))\in Y]$ and $(\forall t)[t\in Y\Leftrightarrow f(g^{-1}(t))\in X]$".

First we show that $|X|=|Y|<\omega$ iff
\textit{There is } $n<\omega$ \textit{ and there are injective maps} $f,g:n\longrightarrow {}^{<\omega}\omega$ \textit{such} \textit{that} $f^*(g^{-1}(Y))=X$ and $g^*(f^{-1}(X))=Y$. Indeed, let $Inj(n,{}^{<\omega}\omega)$ denote the set of all injections from $n$ into ${}^{<\omega}\omega$. Then,
\begin{align*}
 |X|=|Y|\in\omega &\Longleftrightarrow (\exists n)(\exists f,g\in Inj(n,{}^{<\omega}\omega))(f^*(n)=X\wedge g^*(n)=Y)\\
  &\Longrightarrow (\exists n)(\exists f,g\in Inj(n,{}^{<\omega}\omega))(f^*(g^{-1}(Y))=X\wedge \\ &g^*(f^{-1}(X))=Y) \\
 &\Longrightarrow (\exists n)(\exists f,g\in Inj(n,{}^{<\omega}\omega))(X\subseteq f^*(n)\wedge Y\subseteq g^*(n)\\&\wedge g^{-1}(Y)=f^{-1}(X))\\
 &\Longrightarrow (\exists n)(\exists f,g\in Inj(n,{}^{<\omega}\omega))(|X|=|f^{-1}(X)|\wedge |Y|=\\&|g^{-1}(Y)|\wedge |g^{-1}(Y)|=|f^{-1}(X)|)\\
 &\Longrightarrow |X|=|Y|\in\omega.
\end{align*}
Note that the second implication is the desired expression. It remains to analyze ``$f^*(g^{-1}(Y))=X$" (and similarly will be ``$g^*(f^{-1}(X))=Y$").

For $h$, an injective map from a subset of $\omega$ into $\omega$, let $h^{-1}$ denote also the map from $Range(h)$ to $\omega$ that sends $t\in Range(h)$ to the unique element in $h^{-1}(\{t\})$.
Remark now that, for $f$ and $g$ like above, because they are injective we have:
\begin{align*}
f^*(g^{-1}(Y))=X &\Longleftrightarrow (\forall t)[t\in X\Leftrightarrow (\exists s)(s\in Y\wedge f(g^{-1}(s))=t)]\\
&\Longleftrightarrow (\forall t)[t\in X\Leftrightarrow (\exists s)(s\in Y\wedge g^{-1}(s)=f^{-1}(t))]\\
&\Longleftrightarrow (\forall t)[t\in X\Leftrightarrow (\exists s)(s\in Y\wedge s=g(f^{-1}(t)))]\\
&\Longleftrightarrow (\forall t)[t\in X\Leftrightarrow g(f^{-1}(t))\in Y].
\end{align*}

Now after knowing the appropriate way for expressing ``$|X|=|Y|$", we are ready to prove our main theorem.

\begin{proof} We use the above work to show that $E_A$ is Borel. The following are just direct steps carried out in detail. We have
\begin{align*}
 E_A &= \{(x,y): (\forall\varphi\in A) (|\varphi^x|=|\varphi^y|)\}\\
  &= \bigcap_{\varphi\in A}\{(x,y): |\varphi^x|=|\varphi^y|\}\\
  &=\bigcap_{\varphi\in A}\Big[\{(x,y): |\varphi^x|=|\varphi^y|\in\omega\}\cup  \{(x,y): |\varphi^x|,|\varphi^y|\text{ are both infinite}\}\Big]\\
  &=\bigcap_{\varphi\in A}\Big[\{(x,y):(\exists n)(\exists f,g\in Inj(n,{}^{<\omega}\omega))(f^*(g^{-1}(\varphi^y))=\varphi^x\wedge \\& g^*(f^{-1}(\varphi^x))=\varphi^y)\}\cup
  \{(x,y):(\forall n)(\exists m,k>n)(\mu(m)\in \varphi^x\wedge \mu(k)\in \varphi^y)\}\Big]\\
  &=\bigcap_{\varphi\in A}\Big[\bigcup_n\bigcup_{f,g\in Inj(n,{}^{<\omega}\omega)}\{(x,y):f^*(g^{-1}(\varphi^y))=\varphi^x\wedge g^*(f^{-1}(\varphi^x))=\varphi^y\}\cup\\ &\bigcap_n\bigcup_{m,k>n}\{(x,y):\mu(m)\in \varphi^x\wedge \mu(k)\in \varphi^y\}\Big]\\
         \end{align*}
  \begin{align*}
  &=\bigcap_{\varphi\in A}\Big[\bigcup_n\bigcup_{f,g\in Inj(n,{}^{<\omega}\omega)}\{(x,y):(\forall t\in{}^{<\omega}\omega)(t\in \varphi^x \Leftrightarrow g( f^{-1}(t))\in \varphi^y)\wedge \\
  &(\forall t\in{}^{<\omega}\omega)(t\in \varphi^y \Leftrightarrow f( g^{-1}(t))\in \varphi^x)\}\cup \bigcap_n\bigcup_{m,k>n}\{(x,y):x\in Mod(\varphi,\mu(m))\wedge \\&y\in Mod(\varphi,\mu(k))\}\Big]\\
  &=\bigcap_{\varphi\in A}\Big[\bigcup_n\bigcup_{f,g\in Inj(n,{}^{<\omega}\omega)}\bigcap_{t\in{}^{<\omega}\omega}\{(x,y):(t\in \varphi^x \Leftrightarrow g( f^{-1}(t))\in \varphi^y)\wedge\\&(t\in \varphi^y \Leftrightarrow f( g^{-1}(t))\in\varphi^x)\}\cup\bigcap_n\bigcup_{m,k>n}(Mod(\varphi,\mu(m))\times Mod(\varphi,\mu(k)))\Big]\\
  &=\bigcap_{\varphi\in A}\Big[\bigcup_n\bigcup_{f,g\in Inj(n,{}^{<\omega}\omega)}\bigcap_{t\in{}^{<\omega}\omega}\{(x,y):[x\in Mod(\varphi,t) \Leftrightarrow  y\in Mod(\varphi,g( f^{-1}(t)))]\wedge\\&[y\in Mod(\varphi,t)  \Leftrightarrow x\in Mod(\varphi,f( g^{-1}(t)))]\}\cup\bigcap_n\bigcup_{m,k>n}(Mod(\varphi,\mu(m))\times \\& Mod(\varphi,\mu(k)))\Big]\\
  &=\bigcap_{\varphi\in A}\Big[\bigcup_n\bigcup_{f,g\in Inj(n,{}^{<\omega}\omega)}\bigcap_{t\in{}^{<\omega}\omega}\big(\{(x,y):x\in Mod(\varphi,t) \Leftrightarrow y\in Mod(\varphi,g( f^{-1}(t)))\}\cap\\&\{(x,y):y\in Mod(\varphi,t)  \Leftrightarrow x\in Mod(\varphi,f( g^{-1}(t)))\}\big)\cup\bigcap_n\bigcup_{m,k>n}[Mod(\varphi,\mu(m))\\&\times Mod(\varphi,\mu(k))]\Big]\\
  &=\bigcap_{\varphi\in A}\Big[\bigcup_n\bigcup_{f,g\in Inj(n,{}^{<\omega}\omega)}\bigcap_{t\in{}^{<\omega}\omega}\bigg(\{(x,y):[x\in Mod(\varphi,t) \wedge y\in Mod(\varphi,g( f^{-1}(t)))]\vee\\&[x\notin Mod(\varphi,t) \wedge y\notin Mod(\varphi,g( f^{-1}(t)))]\}\cap\{(x,y):[y\in Mod(\varphi,t) \wedge\\& x\in Mod(\varphi,f( g^{-1}(t)))]\vee [y\notin Mod(\varphi,t)  \wedge x\notin Mod(\varphi,f( g^{-1}(t)))]\}\bigg) \cup\\&\bigcap_n\bigcup_{m,k>n}[Mod(\varphi,\mu(m))\times Mod(\varphi,\mu(k))]\Big]\\
   \end{align*}
  \begin{align*}
  &=\bigcap_{\varphi\in A}\Big[\bigcup_n\bigcup_{f,g\in Inj(n,{}^{<\omega}\omega)}\bigcap_{t\in{}^{<\omega}\omega}\bigg(\Big([Mod(\varphi,t) \times Mod(\varphi,g( f^{-1}(t)))]\cup [Mod(\neg\varphi,t) \times \\&Mod(\neg\varphi,g( f^{-1}(t)))]\Big)\cap \Big([Mod(\varphi,f( g^{-1}(t)))\times Mod(\varphi,t)]\cup  [Mod(\neg\varphi,f( g^{-1}(t)))\times \\&Mod(\neg\varphi,t)]\Big)\bigg) \cup\bigcap_n\bigcup_{m,k>n}[Mod(\varphi,\mu(m))\times Mod(\varphi,\mu(k))]\Big].
  \end{align*}
\end{proof}

It follows now, from Theorem \ref{main}, that when $F$ is countable (so that $t_F$ is Polish), $E_A$ satisfies the Glimm-Effros Dichotomy (see \cite{HKL}). Thus, it has either countably many equivalence classes or else perfectly many classes.
\begin{corollary}\label{cor}
Let $T$ be an $L_{\omega_1\omega}$ sentence (this includes first order theories in  countable languages). If $T$ has an uncountable set of pairwise distinguishable (in any countable fragment of $L_{\omega_1\omega}$) countable models, then it has such a set of size $2^{\aleph_0}$ (and so has $2^{\aleph_0}$ non-isomorphic countable models).
\end{corollary}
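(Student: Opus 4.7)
The plan is to combine Theorem~\ref{main} with the dichotomy that the author records in the paragraph immediately following it: a Borel equivalence relation on a Polish space has either countably many equivalence classes or a perfect set of pairwise inequivalent points. First, given the $L_{\omega_1\omega}$-sentence $T$, I would fix a countable set of formulas $A \subseteq L_{\omega_1\omega}$ with respect to which the uncountable family of countable models is pairwise distinguishable, and let $F$ be a countable fragment of $L_{\omega_1\omega}$ containing $A \cup \{T\}$. By the result of Sami cited earlier, $(X_L, t_F)$ is then Polish. Since $T \in F$, both $Mod(T)$ and $Mod(\neg T)$ lie in $\mathcal{B}_F$, so $Mod(T)$ is clopen in $t_F$ and $(Mod(T), t_F \restr{Mod(T)})$ is itself a Polish space.

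Next, I would note that a set of pairwise distinguishable-in-$A$ countable models of $T$ corresponds exactly to a selector of distinct equivalence classes of the restriction $E_A \cap (Mod(T) \times Mod(T))$. By Theorem~\ref{main}, $E_A$ is Borel in $(X_L, t_F)^2$; restricting to the clopen square $Mod(T) \times Mod(T)$ keeps it Borel in that Polish product space.

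Finally, I would apply the dichotomy: this restriction of $E_A$ is a Borel equivalence relation on a Polish space, hence by the Silver/Glimm--Effros-style alternative (the HKL reference in the paper), it has either countably many equivalence classes or a perfect set of pairwise $E_A$-inequivalent points. The hypothesis on $T$ rules out the first alternative, so we obtain a perfect, and hence cardinality-$2^{\aleph_0}$, set of pairwise $E_A$-inequivalent codes lying inside $Mod(T)$. These code $2^{\aleph_0}$ pairwise distinguishable countable models of $T$, and since being distinguishable entails being non-isomorphic, the parenthetical conclusion follows at once. There is no substantive obstacle; the only step requiring a moment's care is the initial bookkeeping---enlarging the witnessing countable $A$ to a countable fragment $F$ containing $T$, so that $Mod(T)$ becomes clopen and one may legitimately restrict the Borel relation to a Polish subspace---after which the corollary is just an invocation of the dichotomy.
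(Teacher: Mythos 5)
Your proposal is correct and follows essentially the same route as the paper: restrict the Borel equivalence relation from Theorem~\ref{main} to the standard Borel (in your version, clopen Polish) space of codes of models of $T$ and invoke Silver's dichotomy to pass from uncountably many to perfectly many classes. Your extra bookkeeping---choosing $F$ to contain $T$ so that $Mod(T)$ is clopen---is a slightly more explicit justification of what the paper compresses into the assertion that $M^T$ is a standard Borel space, but the argument is the same.
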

\begin{proof}
Let $F$ be a countable fragment of $L_{\omega_1\omega}$. Let $M^T$ denote the Borel subset of $X_L$ that corresponds to models of $T$. $M^T$ is a standard Borel space with the topology induced by $t_F$. It follows, by a result of Silver (see \cite{Silver}), that the restriction of $E_F$ to $M^T$ (which is Borel) has either countably many equivalence classes or else perfectly many.
\end{proof}

It also follows from our results that Vaught's conjecture holds for the very simple language $\mathfrak{L}=\{R\}$ where $R$ is a unary relation symbol. To see this, we just need to notice that \emph{two $\mathfrak{L}$-structures are isomorphic iff their codes in $X_\mathfrak{L}=2^{\mathbb{N}}$ are equivalent w.r.t. $E_{\{R(v),\neg R(v)\}}$}. For more details, assume that for $x,y\in X_{\mathfrak{L}},$ $|R^x|=|R^y|$ and $|(\neg R)^x|=|(\neg R)^y|$ (note that $(\neg R)^x=\mathbb{N}\setminus R^x$). Then there are bijections $\tau:R^x\longrightarrow R^y$ and $\sigma:\mathbb{N}\setminus R^x\longrightarrow\mathbb{N}\setminus R^y.$ Clearly, $\theta=\tau\cup \sigma$ is a permutation of $\mathbb{N}$. Moreover, it is an isomorphism from $\mathcal{U}_x$ onto $\mathcal{U}_y$ because $$(\forall s\in\mathbb{N})(x(s)=1\Longleftrightarrow y(\theta(s))=1).$$ The last talk can also be restated in the language of Polish group actions. This gives us a simple example of a continuous action of the Polish group $S_\infty$ (the symmetry group of $\mathbb{N}$) on the Polish space $X_\mathfrak{L}$  with Borel orbit equivalence relation ($X_\mathfrak{L}$ is equipped with any of the the topologies $t_F$ defined above). Our action is $J:S_\infty\times 2^\mathbb{N}\longrightarrow2^\mathbb{N},$ defined by $$J(g,x)=y\Longleftrightarrow x\circ g^{-1}=y.$$ Recall that this action is continuous (see \cite{Sami}). Clearly, $J(g,x)=y$ iff $g$ is an isomorphism from $\mathcal{U}_x$ onto $\mathcal{U}_y.$ By the above discussion, the orbit equivalence relation is Borel.

\begin{remark}\label{equality}
Let $F$ be a countable fragment of $L_{\omega_1\omega}$ and $A\subseteq F.$ Assume our language $L$ with equality. Let  $L^*=L_0\cup L_1$ where $L_0$ and $L_1$ are disjoint copies of $L$.
Then, $X_{L^*}\cong X_{L}\times X_{L}$.

For each formula $\varphi\in A,$ let $\varphi^*$ be the
sentence $$\bigwedge_{n\in\omega}(\exists^n \bar{x})\varphi_0(\bar{x})\leftrightarrow (\exists^n \bar{x})\varphi_1(\bar{x})$$
where $\varphi_0,$ $\varphi_1$ are the copies of $\varphi$ in $L_0,L_1$ respectively, and $\exists^n$ is a shorthand for
``there exists at least $n$ tuples such that ..."

It is then immediate that  $(x,y)\in E_A$ iff the structure $\mathcal{U}$ of $L^*$ such
that $\mathcal{U}|_{L_0}=\mathcal{U}_x$ and $\mathcal{U}|_{L_1}=\mathcal{U}_y$ satisfies $\bigwedge_{\varphi\in A}\varphi^*.$
This means that our equivalence relation between structures corresponds to the Borel subset of $X_{L^*}$ of models of
the formula $\bigwedge_{\varphi\in A}\varphi^*.$
\end{remark}

\newpage

\vspace{8cm}
\footnotesize
Department of Mathematics, Faculty of Science, Cairo University, Giza, Egypt.

E-mail: moassem89@gmail.com
\end{document}